\theoremstyle{plain}
\newtheorem{theorem}{Theorem}[section]
\newtheorem{corollary}[theorem]{Corollary}
\newtheorem{lemma}[theorem]{Lemma}
\theoremstyle{definition}
\newtheorem{example}{Example}
\theoremstyle{remark}
\newcommand{\bbR}{\mathbb{R}}
\newcommand{\setA}{\mathscr{A}}
\newcommand{\setB}{\mathscr{B}}
\newcommand{\Hset}{\mathcal{H}}
\newcommand{\Kset}{\mathcal{K}}
\newcommand{\Mset}{\mathcal{M}}
\DeclareMathOperator*{\co}{co}
\newcommand{\transpose}{\mathsmaller{\mathsf{T}}}
\let\ge\geqslant
\let\le\leqslant
\begin{document}
\date{}
\title{Minimax theorem for the spectral radius\\ of the product of
non-negative matrices}

\author{Victor Kozyakin\thanks{Kharkevich Institute for Information Transmission
Problems, Russian Academy of Sciences, Bolshoj Karetny lane 19, Moscow
127051, Russia, e-mail: kozyakin@iitp.ru\newline\indent~~Kotel'nikov
Institute of Radio-engineering and Electronics, Russian Academy of Sciences,
Mokhovaya 11-7, Moscow 125009, Russia}}

\maketitle

\begin{abstract}
We prove the minimax equality for the spectral radius $\rho(AB)$ of the
product of matrices $A\in\setA$ and $B\in\setB $, where $\setA$ and $\setB$
are compact sets of non-negative matrices of dimensions $N\times M$ and
$M\times N$, respectively, satisfying the so-called hourglass alternative.
\medskip

\noindent\textbf{Keywords:} matrix products; non-negative matrices;
spectral radius; minimax; saddle point
\medskip

\noindent\textbf{AMS Subject Classification:} 15A45; 15B48; 49J35
\end{abstract}

\section{Introduction}\label{S-intro}
In the article, we consider the question about conditions under which, for
compact (closed and bounded) sets of matrices $\setA$ and $\setB$, the
minimax equality holds%
\begin{equation}\label{E:minimax}
\min_{A\in\setA}\max_{B\in\setB}\rho(AB)=\max_{B\in\setB}\min_{A\in\setA}\rho(AB),
\end{equation}
where $\rho(\cdot)$ is the spectral radius of a matrix.

Clearly, equality~\eqref{E:minimax} is not true, in general, see
Example~\ref{Ex1} below. However, some time ago in a private discussion
Eugene Asarin conjectured that equality~\eqref{E:minimax} still might be
valid for certain classes of non-negative matrices. This conjecture, based on
the analysis of properties of the matrix multiplication
games~\cite{ACDDHK:STACS15}, was supported by numerous computer experiments
indicating that equality~\eqref{E:minimax} holds for the classes of matrices
with the so-called `independent row uncertainty'~\cite{BN:SIAMJMAA09} (see
the definition in Section~\ref{S:Hsets}). Unfortunately, attempts to formally
prove the required equality, even for the simplest cases, did not lead to
success for a long time.

The main cause of arising difficulties was the fact that most of the
classical proofs of the minimax theorem for functions $f(x,y)$ assume some
kind of convexity or quasiconvexity in one of the arguments of the function
and concavity or quasiconcavity in the other (see, e.g., \cite{Sion:PJM58}
and also rather old but still urgent survey~\cite{Simons:NOA95}).

As is known, the spectral radius of a matrix has a number of convex-like
properties, see, e.g., \cite{King:QJM61,Fried:LMA81,Elsner:LAA84,Nuss:LAA86}.
In particular, the spectral radius of a nonnegative matrix is both
quasiconvex and quasiconcave with respect to every row of a matrix as well as
to its diagonal elements (but not with respect to the whole matrix). However,
we were not able to find any analogs of convexity/quasiconvexity or
concavity/quasiconcavity of the function $\rho(AB)$ with respect to the
matrix variables $A$ and $B$. Moreover, in view of the identity
$\rho(AB)\equiv\rho(BA)$ the matrices $A$ and $B$ play, in a sense, an
equivalent role in equality~\eqref{E:minimax}. Therefore, any kind of
`convexity' of the function $\rho(AB)$ with respect, say, to the variable $A$
would have to involve its `concavity' with respect to the same variable,
which casts doubt on the applicability of the `convex-concave' arguments in a
possible proof of~\eqref{E:minimax}.

Recently, in~\cite{ACDDHK:STACS15} the author has managed to overcome the
indicated difficulties and to prove equality~\eqref{E:minimax} for the
classes of matrices with independent row uncertainty arising in the theory of
matrix multiplication games~\cite{ACDDHK:STACS15}, the theory of switching
systems~\cite{Koz:ArXiv15-2} and so forth. The relevant proof, as is often
the case, is turned out to be easy enough, and its idea was based on the
so-called hourglass alternative, first formulated in~\cite{ACDDHK:STACS15}
and in a more general form later used in~\cite{Koz:LAA16} for proving the
finiteness conjecture for some classes of non-negative matrices.

The goal of the article is to prove equality~\eqref{E:minimax} for more
general classes of matrices, the so-called classes of non-negative
$\Hset$-sets of matrices resulting from axiomatization of the statements
constituting the hourglass alternative.

The structure of the work is as follows. In Section~\ref{S:Hsets}, we recall
the formulation of the hourglass alternative for the sets of positive
matrices. Then we outline the principal properties of the sets of matrices
satisfying the hourglass alternative, $\Hset$-sets of matrices, among which
the most important property is that the totality of all $\Hset$-sets of
matrices, supplemented by the zero and the identity matrices, forms a
semiring with respect to the Minkowski set operations. In
Section~\ref{S:main}, we show in Theorem~\ref{T:saddle} that the spectral
radius $\rho(AB)$, with the matrices $A$ and $B$ taken from $\Hset$-sets of
matrices, has a saddle point. From this the main result,
Theorem~\ref{T:minimax}, asserting the validity of equality~\eqref{E:minimax}
immediately follows. The proof of Theorem~\ref{T:saddle} is given in
Section~\ref{S:proof:minmax}, its idea is heavily based on the hourglass
alternative.

\section{Hourglass alternative and $\Hset$-sets of matrices}\label{S:Hsets}

Following~\cite{Koz:LAA16}, we recall necessary definitions and facts. For
vectors $x,y\in\bbR^{N}$, we write $x \ge y$ or $x>y$, if all coordinates of
the vector $x$ are not less or strictly greater, respectively, than the
corresponding coordinates of the vector $y$. As usual, a vector or a matrix
is called non-negative (positive) if all its elements are non-negative
(positive).

Denote by $\Mset(N,M)$ the set of all real $(N\times M)$-matrices. This set
can be identified with space $\bbR^{N\times M}$ and therefore, depending on
the context, it can be interpreted as a topological, metric or normed space.
A set of positive matrices $\setA\subset\Mset(N,M)$ will be called
\emph{$\Hset$-set} or \emph{hourglass set} if for each pair $(\tilde{A},u)$,
where $\tilde{A}$ is a matrix from the set $\setA$ and $u$ is a positive
vector, the following assertions hold:
\begin{quote}
\begin{enumerate}[\rm H1:]
\item \emph{either $Au\ge\tilde{A}u$ for all $A\in\setA$ or there exists
    a matrix $\bar{A}\in\setA$ such that $\bar{A}u\le\tilde{A}u$ and
    $\bar{A}u\neq\tilde{A}u$;}

\item \emph{either $Au\le\tilde{A}u$ for all $A\in\setA$ or there exists
    a matrix $\bar{A}\in\setA$ such that  $\bar{A}u\ge\tilde{A}u$ and
    $\bar{A}u\neq\tilde{A}u$.}
\end{enumerate}
\end{quote}

These assertions have a simple geometrical interpretation. Given a matrix
$\tilde{A}\in\setA$  and a vector $u>0$,  imagine that the sets
$\{x:x\le\tilde{A}u\}$ and $\{x:x\ge\tilde{A}u\}$ form the lower and upper
bulbs of an hourglass with the neck at the point $\tilde{A}u$. Let us treat
the elements $Au$ as grains of sand. Then according to assertions H1 and H2
either all the grains $Au$ fill one of the bulbs (upper or lower), or there
remains at least one grain in the other bulb (lower or upper, respectively).
Such an interpretation gives reason to call assertions H1 and H2 the
\emph{hourglass alternative}. This alternative will play a key role in what
follows. It was raised up and used by the author in~\cite{ACDDHK:STACS15} to
analyze the minimax relations between the spectral radii of matrix products,
and also in~\cite{Koz:LAA16} to prove the finiteness conjecture for
non-negative $\Hset$-sets of matrices.

Failure of the inequality $u\ge v$ for vectors $u$ and $v$ does not imply, in
general, the inverse inequality $u\le v$. From this it follows that
assertions H1 and H2 are not valid for arbitrary sets of matrices $\setA$:
non-fulfillment of the inequality $Au\ge\tilde{A}u$ for all $A\in\setA$ does
not mean that for some matrix $\bar{A}\in\setA $ the inverse inequality
$\bar{A}u\le\tilde{A}u$ will be valid. And similarly, non-fulfillment of the
inequality $Au\le\tilde{A}u$ for all $A\in\setA $ does not mean that for some
matrix $\bar{A}\in\setA $ the inverse inequality $\bar{A}u\ge\tilde{A}u$ will
be valid.

In what follows, we will need to make various kinds of limit transitions with
the matrices from the sets under consideration as well as with the sets of
matrices themselves. In this connection, it is natural to restrict our
considerations to compact sets of matrices. By $\Hset(N,M)$ we denote the set
of all compact $\Hset$-sets of positive $(N\times M)$-matrices.

Present some examples of $\Hset$-sets.

\begin{example}
A trivial example of $\Hset$-sets are \emph{linearly ordered} sets of
positive matrices $\setA=\{A_{1}$, $A_{2}$, \ldots, $A_{n}\}$, i.e. the sets
of matrices whose elements satisfy the inequalities
$0<A_{1}<A_{2}<\cdots<A_{n}$. In this case, for each $u>0$, the vectors
$A_{1}u,A_{2}u,\ldots,A_{n}u$ are strictly positive and linearly ordered,
which yields the validity of assertions H1 and H2 for $\setA$. In particular,
any set consisting of a single positive matrix is an $\Hset$-set.\qed%
\end{example}

\begin{example}
A less trivial and more interesting example of $\Hset$-sets, as shown
in~\cite[Lemma 4]{ACDDHK:STACS15} and~\cite[Lemma 1]{Koz:LAA16}, is the class
of sets of positive matrices with independent row uncertainty.
Following~\cite{BN:SIAMJMAA09}, a set of matrices $\setA\subset\Mset(N,M)$ is
called an \emph{IRU-set} (\emph{independent row uncertainty set}) if it
consists of all the matrices
\[
A=\begin{pmatrix}
a_{11}&a_{12}&\cdots&a_{1M}\\
a_{21}&a_{22}&\cdots&a_{2M}\\
\cdots&\cdots&\cdots&\cdots\\
a_{N1}&a_{N2}&\cdots&a_{NM}
\end{pmatrix},
\]
wherein each of the rows $a_{i} = (a_{i1}, a_{i2}, \ldots, a_{iM})$ belongs
to some set of $M$-rows $\setA_{i}$, $i=1,2,\ldots,N$. Clearly, a set
$\setA\subset\Mset(N,M)$ is compact if and only if each set of rows
$\setA_{i}$, $i=1,2,\ldots,N$, is compact.\qed%
\end{example}

To construct another examples of $\Hset$-sets of matrices let us introduce
the operations of Minkowski summation and multiplication for sets of
matrices:
\[
\setA+\setB=\{A+B:A\in\setA,~ B\in\setB\},\quad
\setA\setB=\{AB:A\in\setA,~ B\in\setB\},
\]
and also the operation of multiplication of a set of matrices by a scalar:
\[
t\setA=\setA t=\{tA:t\in\bbR,~A\in\setA\}.
\]
The operation of addition is \emph{admissible} if and only if the matrices
from the sets $\setA$ and $\setB$ are of the same size, while the operation
of multiplication is \emph{admissible} if and only if the sizes of the
matrices from sets $\setA$ and $\setB$ are matched: dimension of the rows of
the matrices from $\setA$ is the same as dimension of the columns of the
matrices from $\setB$. Problems with matching of sizes do not arise when one
considers the sets consisting of square matrices of the same size.

\begin{example}\label{Ex3}
As shown in~\cite[Theorem 2]{Koz:LAA16}, the totality of $\Hset$-sets of
matrices is algebraically closed under the operations of Minkowski summation
and multiplication:
\begin{itemize}
  \item $\setA+\setB\in\Hset(N,M)$ if $\setA,\setB\in\Hset(N,M)$;
  \item $\setA\setB\in\Hset(N,Q)$ if $\setA\in\Hset(N,M)$ and
      $\setB\in\Hset(M,Q)$;
  \item $t\setA=\setA t\in\Hset(N,M)$ if $t>0$ and $\setA\in\Hset(N,M)$.
\end{itemize}
However, in general, $\setA(\setB_{1}+\setB_{2})\neq \setA \setB_{1}
+\setA\setB_{2}$ and $(\setA_{1}+\setA_{2})\setB\neq \setA_{1}\setB
+\setA_{2}\setB$, i.e. the Minkowski operations are not associative. In
particular, $\setA+\setA \neq 2\setA$.

From here it follows that for any integers $n,d\ge1$, the totality
$\Hset(N,M)$ contains all the polynomial sets of matrices
\begin{equation}\label{E:poly}
P(\setA_{1},\setA_{1},\ldots,\setA_{n})=
\sum_{k=1}^{d}\sum_{i_{1},i_{2},\ldots,i_{k}\in\{1,2,\ldots,n\}}
p_{i_{1},i_{2},\ldots,i_{k}}\setA_{i_{1}}\setA_{i_{2}}\cdots\setA_{i_{k}},
\end{equation}
where $\setA_{i}\in\Hset(N_{i},M_{i})$ for $i=1,2,\ldots,n$, and the scalar
coefficients $p_{i_{1},i_{2},\ldots,i_{k}}$ are positive. One must only
ensure that the products $\setA_{i_{1}}\setA_{i_{2}}\cdots\setA_{i_{k}}$
would be admissible and determine the sets of matrices of dimension $N\times
M$.\qed
\end{example}

\subsection{Closure of the set $\Hset(N,M)$}

Given some matrix norm $\|\cdot\|$ on the set $\Mset(N,M)$, denote by
$\Kset(N,M)$ the totality of all compact subsets of $\Mset(N,M)$. Then for
any two sets of matrices $\setA,\setB\in\Kset(N,M)$ the \emph{Hausdorff
metric}
\[
H(\setA,\setB)=
\max\left\{\adjustlimits\sup_{A\in\setA}\inf_{B\in\setB}\|A-B\|,
~\adjustlimits\sup_{B\in\setB}\inf_{A\in\setA}\|A-B\|\right\}
\]
is defined, in which $\Kset(N,M)$ becomes a full metric space. Then
$\Hset(N,M)\subset \Kset(N,M)$, equipped with the Hausdorff metric, also
becomes a metric space.

As is known, see, e.g.,~\cite[Chapter~E, Proposition~5]{Ok07}, any mapping
$F(\setA)$ acting from $\Kset(N,M)$ into itself is continuous in the
Hausdorff metric at some point $\setA_{0}$ if and only if it is both upper
and lower semicontinuous. It is known also~\cite[Section~1.3]{BGMO:84:e} that
the mappings
\[
(\setA,\setB)\mapsto \setA+\setB,\quad (\setA,\setB)\mapsto \setA\setB,\quad
\setA\mapsto\setA\times\setA\times\cdots\times\setA,\quad
\setA\mapsto\co(\setA),
\]
where $\setA$ and $\setB$ are compact sets, are both upper and lower
semicontinuous. Therefore these mappings are continuous in the Hausdorff
metric, and the same continuity properties has any polynomial
mapping~\eqref{E:poly}.

Denote by $\overline{\Hset}(N,M)$ the closure of the set $\Hset(N,M)$ in the
Hausdorff metric. Since the Minkowski summation and multiplication of matrix
sets are continuous in the Hausdorff metric then, as follows from
Example~\ref{Ex3}, all the `polynomial' sets of matrices with the arguments
from $\overline{\Hset}$-sets of matrices (with matched dimensions) take
values again in the $\overline{\Hset}$-set of matrices. However, the answer
to the question when, for a specific $\setA$, the inclusion
$\setA\in\overline{\Hset}(N,M)$ holds, requires further analysis. We restrict
ourselves to the description of only one case where the answer to this
question can be given explicitly~\cite[Lemma~4]{Koz:LAA16}: \emph{the values
of any polynomial mapping~\eqref{E:poly} with the arguments from finite
linearly ordered sets of non-negative matrices or from IRU-sets of
non-negative matrices belong to the closure in the Hausdorff metric of the
totality of positive $\Hset$-sets of matrices.}

\section{Main results}\label{S:main}

In the theory of functions, one of the fundamental criteria of feasibility of
the minimax equality is the following \emph{saddle point principle},
see~\cite[Section 13.4]{von1947theory}.

\begin{lemma}\label{L:MMcriterium}
Let $f (x, y)$ be a continuous function on the product of compact spaces $X
\times Y$. Then
\[
\min_{x}\max_{y} f(x,y) \ge \max_{y}\min_{x} f(x,y).
\]
The exact equality holds if and only if there exists a saddle point, i.e.~a
point $(x_{0}, y_{0})$ satisfying the inequalities
\[
f(x_{0},y) \le f(x_{0},y_{0}) \le f(x,y_{0})
\]
for all $x \in X$, $y \in Y$, and then
\[
\min_{x}\max_{y} f(x,y) =\max_{y}\min_{x} f(x,y)=f(x_{0},y_{0}).
\]
\end{lemma}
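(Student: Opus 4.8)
The plan is to treat the two halves of the statement separately: first the unconditional inequality $\min_{x}\max_{y} f \ge \max_{y}\min_{x} f$, and then the equivalence between exact equality and the existence of a saddle point. Before anything else I would check that all four iterated extrema are genuinely attained. For fixed $x$ the map $y \mapsto f(x,y)$ is continuous on the compact space $Y$, so $\max_{y} f(x,y)$ exists; the resulting function $x \mapsto \max_{y} f(x,y)$ is itself continuous (this follows from the uniform continuity of $f$ on the compact product $X \times Y$, or equivalently from Berge's maximum theorem), and hence attains its minimum on the compact $X$. The same reasoning applies to $\max_{y}\min_{x} f(x,y)$. This is routine but must be recorded, since the notation $\min$ and $\max$ (rather than $\inf$ and $\sup$) presupposes attainment.

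For the inequality, I would start from the elementary two-sided bound valid for every pair $(x,y)$,
\[
\min_{x'} f(x',y) \le f(x,y) \le \max_{y'} f(x,y').
\]
The outer terms give $\min_{x'} f(x',y) \le \max_{y'} f(x,y')$ for all $x$ and $y$. Taking the maximum over $y$ on the left, whose right-hand side is independent of $y$, and then the minimum over $x$ on the right, whose left-hand side is now independent of $x$, yields $\max_{y}\min_{x'} f(x',y) \le \min_{x}\max_{y'} f(x,y')$, which is precisely the asserted inequality.

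For the equivalence, consider first the direction where a saddle point $(x_{0},y_{0})$ is assumed. From $f(x_{0},y) \le f(x_{0},y_{0})$ for all $y$ I obtain $\max_{y} f(x_{0},y) = f(x_{0},y_{0})$, whence $\min_{x}\max_{y} f \le f(x_{0},y_{0})$; symmetrically $f(x_{0},y_{0}) \le f(x,y_{0})$ gives $\max_{y}\min_{x} f \ge f(x_{0},y_{0})$. Combined with the inequality just proved, all three quantities are squeezed to the common value $f(x_{0},y_{0})$. Conversely, assuming equality with common value $v$, I would select an $x_{0}$ attaining $\max_{y} f(x_{0},y) = v$ and a $y_{0}$ attaining $\min_{x} f(x,y_{0}) = v$. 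Then $f(x_{0},y) \le v$ for all $y$ and $f(x,y_{0}) \ge v$ for all $x$; evaluating both at $(x_{0},y_{0})$ forces $f(x_{0},y_{0}) = v$, and substituting back gives the saddle inequalities $f(x_{0},y) \le f(x_{0},y_{0}) \le f(x,y_{0})$.

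The argument is elementary and self-contained; the only genuine subtlety, and the step I would take the most care over, is the initial verification that the partial extremal functions are continuous so that the outer extrema are attained. This is exactly what licenses naming the minimizer $x_{0}$ and the maximizer $y_{0}$ in the converse direction. Everything else is a matter of chaining the defining inequalities in the correct order.
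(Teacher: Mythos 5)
Your proof is correct and complete, but there is nothing in the paper to compare it against: the paper does not prove this lemma, it simply quotes it as a classical fact with a citation to von Neumann and Morgenstern (Section 13.4 of \emph{Theory of Games and Economic Behavior}). Your argument is the standard one and supplies the missing details correctly: the two-sided bound $\min_{x'} f(x',y) \le f(x,y) \le \max_{y'} f(x,y')$ yields the unconditional inequality; a saddle point squeezes all three quantities to $f(x_0,y_0)$; and conversely, under equality, picking $x_0$ as a minimizer of $\max_y f(\cdot,y)$ and $y_0$ as a maximizer of $\min_x f(x,\cdot)$ and evaluating at $(x_0,y_0)$ recovers the saddle inequalities. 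Your insistence on first verifying that the partial extremal functions $x \mapsto \max_y f(x,y)$ and $y \mapsto \min_x f(x,y)$ are continuous (hence that all four iterated extrema are attained) is exactly the right point of care, and it is consistent with the paper's own toolkit: the paper states this continuity separately as Lemma~\ref{L:Berge} (a simplified Berge maximum theorem) and uses it in the proof of Theorem~\ref{T:saddle} for precisely this purpose.
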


This criterion explains the importance of the following saddle point theorem
for the study of the question about minimax equality~\eqref{E:minimax}.

\begin{theorem}\label{T:saddle}
Let $\setA\in\overline{\Hset}(N,M)$ and $\setB\in\overline{\Hset}(M,N)$. Then
there exist matrices $\tilde{A}\in\setA$ and $\tilde{B}\in\setB$ such that
\begin{equation}\label{E:saddleAB}
\rho(\tilde{A}B)\le\rho(\tilde{A}\tilde{B})\le \rho(A\tilde{B})
\end{equation}
for all $A\in\co(\setA)$ and $B\in\co(\setB)$, where $\co(\cdot)$ denotes the
convex hull of a set.
\end{theorem}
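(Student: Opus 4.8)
The plan is to translate the two saddle inequalities into componentwise vector inequalities read off from Perron eigenvectors, and then to produce the pair $(\tilde A,\tilde B)$ as a fixed point of a best-response map built from the hourglass alternative. First I would record the consequence of the hourglass alternative that drives everything: for a compact positive $\Hset$-set $\setA$ and any $u>0$ there is a matrix attaining the componentwise minimum of $\{Au:A\in\setA\}$, and one attaining the componentwise maximum. To see the minimum, minimize the linear functional $A\mapsto\mathbf 1^{\transpose}Au$ over the compact set $\setA$; any minimizer $\underline A$ must satisfy $\underline Au\le Au$ for all $A$, since otherwise H1 applied to $\tilde A=\underline A$ would furnish $\bar A$ with $\bar Au\le\underline Au$ and $\bar Au\neq\underline Au$, lowering the functional --- a contradiction. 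Maximizing the same functional and invoking H2 gives the componentwise maximizer. Denote by $A_{\min}(u)$ and $B_{\max}(u)$ such selections in $\setA$ and $\setB$.

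Suppose now that we have $\tilde A\in\setA$, $\tilde B\in\setB$ together with the right Perron vectors $u>0$ of $\tilde A\tilde B$ and $v>0$ of $\tilde B\tilde A$, for the common value $\rho:=\rho(\tilde A\tilde B)=\rho(\tilde B\tilde A)$, and suppose the two coupling conditions hold: $\tilde A$ is a componentwise minimizer in the direction $w:=\tilde B u$, and $\tilde B$ is a componentwise maximizer in the direction $s:=\tilde A v$. Then $\tilde Aw=\tilde A\tilde Bu=\rho u$, so for every $A\in\setA$ one gets $A\tilde Bu=Aw\ge\tilde Aw=\rho u$; since $Aw\ge\tilde Aw$ is preserved under convex combinations, the same holds for all $A\in\co(\setA)$, and the Collatz--Wielandt bound $Cu\ge\rho u\Rightarrow\rho(C)\ge\rho$ applied to $C=A\tilde B$ yields $\rho(A\tilde B)\ge\rho$. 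Symmetrically $\tilde Bs=\rho v$ gives $B\tilde Av=Bs\le\rho v$ for all $B\in\co(\setB)$, whence $\rho(\tilde AB)=\rho(B\tilde A)\le\rho$. This is exactly \eqref{E:saddleAB}, so the whole problem reduces to producing $(\tilde A,\tilde B)$ satisfying the two coupling conditions.

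To obtain such a pair I would set up a fixed-point argument on the compact convex product of normalized direction vectors $(w,s)$. Given $(w,s)$, choose $A=A_{\min}(w)\in\setA$ and $B=B_{\max}(s)\in\setB$, let $u,v$ be the normalized strictly positive Perron vectors of the positive matrices $AB$ and $BA$, and return the normalized directions $(\,\widehat{Bu},\widehat{Av}\,)$. A fixed point of this map reproduces, up to irrelevant positive scalings, the two coupling conditions with $\tilde A=A_{\min}(w)$ and $\tilde B=B_{\max}(s)$. I would then verify the hypotheses of Kakutani's theorem: nonemptiness, compactness, upper semicontinuity of the argmin/argmax selections, and continuity of the Perron vector of a positive matrix. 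The case $\setA\in\overline{\Hset}(N,M)$, $\setB\in\overline{\Hset}(M,N)$ is reached by choosing positive $\Hset$-sets $\setA_n\to\setA$ and $\setB_n\to\setB$ in the Hausdorff metric, taking the saddle points $(\tilde A_n,\tilde B_n)$ just constructed, extracting convergent subsequences (the matrices are uniformly bounded), and passing \eqref{E:saddleAB} to the limit using the continuity of $\rho$ and the Hausdorff-continuity of $\co(\cdot)$ recorded in Section~\ref{S:Hsets}.

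The main obstacle is the fixed-point step, specifically the selection built from the hourglass extremizers. The componentwise extremal value vector is unique, but the extremizing matrix, and hence the product fed into the Perron computation, need not be, so the best-response map is genuinely set-valued and its values are not obviously convex --- the one hypothesis of Kakutani's theorem that is not automatic. I expect to circumvent this by first proving the theorem for $\Hset$-sets on which $A_{\min}(u)$ and $B_{\max}(u)$ are unique for every $u>0$, for instance finite strictly linearly ordered sets, where the map is single-valued and continuous and Brouwer's theorem applies, and then recovering the general statement by the same Hausdorff-approximation and limit transition used for the closure. A secondary point requiring care is that strict positivity, and with it the uniqueness and continuity of the Perron vector, is available only before the limit is taken; the degeneracies that may appear in the limit are precisely what the passage to $\overline{\Hset}$ and to $\co(\cdot)$ is designed to absorb.
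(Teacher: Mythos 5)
Your two preparatory steps are correct: the deduction from H1/H2 that a compact $\Hset$-set contains, for every direction $u>0$, a componentwise minimizer $A_{\min}(u)$ and maximizer $B_{\max}(u)$ is sound, and so is the reduction showing that any pair $(\tilde A,\tilde B)$ satisfying your two coupling conditions obeys \eqref{E:saddleAB}; indeed, up to positive scalar factors your coupling conditions are exactly the inequalities \eqref{E:rvaaa} and \eqref{E:wbbb} on which the paper's proof also turns (your $u$ and $s$ are proportional to the paper's $v$, your $w$ is the paper's $w$). The genuine gap is the existence of the coupled pair, and your proposed repair does not close it. You concede that Kakutani's theorem is blocked because the best-response map has non-convex values; the fallback is to prove the theorem for $\Hset$-sets with unique extremizers, ``for instance finite strictly linearly ordered sets,'' and then reach general $\Hset$-sets by Hausdorff approximation. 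This fails twice over. First, for a strictly linearly ordered set the construction is vacuous: $A_{\min}(u)$ is the least matrix and $B_{\max}(u)$ the greatest matrix independently of $u$, so the fixed-point machinery is never exercised and nothing is learned about the general mechanism. Second, and fatally for the approximation step, linearly ordered sets are not dense in $\Hset(N,M)$: if $\setA_n\to\setA$ in the Hausdorff metric with each $\setA_n$ linearly ordered, then by Lemma~\ref{L:Hausconv} any two matrices of $\setA$ are limits of comparable pairs and hence comparable, so $\setA$ is itself linearly ordered. Consequently no IRU-set containing two incomparable matrices --- the motivating class for the whole paper --- can be approximated this way, and you exhibit no other class with unique extremizers that is dense in $\Hset(N,M)$. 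So the central existence step remains unproved.

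The paper sidesteps fixed-point theory entirely by choosing the candidate pair variationally: for each $B\in\setB$ take $A_B\in\setA$ minimizing $\rho(AB)$, use Lemma~\ref{L:Berge} to get continuity of $B\mapsto\min_{A\in\setA}\rho(AB)$ and hence a maximizer $\tilde B\in\setB$, and set $\tilde A=A_{\tilde B}$, so that $\rho(\tilde A\tilde B)=\max_{B\in\setB}\min_{A\in\setA}\rho(AB)$ as in \eqref{E:tildeAB}. Your coupling conditions are then verified by contradiction rather than by construction: if \eqref{E:rvaaa} failed, H1 together with the strict assertion (ii) of Lemma~\ref{L:1} would produce $\bar A\in\setA$ with $\rho(\bar A\tilde B)<\rho(\tilde A\tilde B)$, contradicting the minimality defining $\tilde A$; if \eqref{E:wbbb} failed, H2 combined with \eqref{E:rvaaa} (applied to $A_{\bar B}$) and assertion (iv) of Lemma~\ref{L:1} would give $\rho(A_{\bar B}\bar B)>\rho(\tilde A\tilde B)$, contradicting the maximality defining $\tilde B$. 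If you replace your fixed-point step by this max-min selection, your own reduction finishes the argument, and your final limit transition to $\overline{\Hset}(N,M)$ and $\overline{\Hset}(M,N)$ --- which is essentially the paper's, via Lemma~\ref{L:Hausconv} and approximation of a fixed $A\in\co(\setA)$ by convex combinations drawn from $\setA_n$ --- goes through unchanged.
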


In a finite-dimensional space the convex hull of a compact set is a compact
set. Now as the sets of matrices $\setA$ and $\setB$ can be treated as
subsets of finite-dimensional spaces $\bbR^{N\times M}$ and $\bbR^{M\times
N}$, respectively, then the sets $\co(\setA)$ and $\co(\setB)$ in
Theorem~\ref{T:saddle} are compact. If $\setA$ is an IRU-set of matrices
constituted by a set of rows $\setA_{1},\setA_{2},\ldots,\setA_{N}$, then its
convex hull $\co(\setA)$ is the IRU-set constituted by the set of rows
$\co(\setA_{1})$, $\co(\setA_{2})$, \ldots, $\co(\setA_{N})$. If $\setA$ is
an $\Hset$-set of matrices then the structure of the set $\co(\setA)$ is more
complicated.

In Theorem~\ref{T:saddle} the saddle point $(\tilde{A},\tilde{B})$ belongs to
the set $\setA\times\setB$, while the matrices $A$ and $B$, for which the
inequality~\eqref{E:saddleAB} holds, belong to the wider sets:
$(A,B)\in\co(\setA)\times\co(\setB)$. This makes possible deducing a variety
of minimax theorems for the spectral radius $\rho(AB)$ from
Theorem~\ref{T:saddle}.

\begin{theorem}\label{T:minimax}
Let $\setA\in\overline{\Hset}(N,M)$ and $\setB\in\overline{\Hset}(M,N)$. Then
there exists a number $\rho_{*}\ge0$ such that
\[
\min_{A\in\tilde{\setA}}\max_{B\in\tilde{\setB}}\rho(AB)=
\max_{B\in\tilde{\setB}}\min_{A\in\tilde{\setA}}\rho(AB)=\rho_{*}
\]
for any compact sets of matrices $\tilde{\setA}$ and $\tilde{\setB}$
satisfying
\[
\setA\subseteq\tilde{\setA}\subseteq\co(\setA),\quad
\setB\subseteq\tilde{\setB}\subseteq\co(\setB).
\]
\end{theorem}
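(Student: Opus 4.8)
The plan is to derive this statement as a direct corollary of Theorem~\ref{T:saddle} together with the saddle point principle of Lemma~\ref{L:MMcriterium}. First I would invoke Theorem~\ref{T:saddle} to obtain matrices $\tilde{A}\in\setA$ and $\tilde{B}\in\setB$ satisfying $\rho(\tilde{A}B)\le\rho(\tilde{A}\tilde{B})\le\rho(A\tilde{B})$ for all $A\in\co(\setA)$ and $B\in\co(\setB)$, and then define $\rho_{*}=\rho(\tilde{A}\tilde{B})$. This number depends only on the sets $\setA$ and $\setB$, and not on the intermediate sets $\tilde{\setA}$ and $\tilde{\setB}$, which is precisely what the conclusion requires.

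The key observation is that the saddle point $(\tilde{A},\tilde{B})$ furnished by Theorem~\ref{T:saddle} lies in the \emph{inner} sets $\setA\times\setB$, while its saddle inequalities remain valid over the \emph{outer} sets $\co(\setA)\times\co(\setB)$; this asymmetry is exactly what allows the point to survive the sandwiching. Indeed, for any admissible pair $\tilde{\setA},\tilde{\setB}$ we have $\tilde{A}\in\setA\subseteq\tilde{\setA}$ and $\tilde{B}\in\setB\subseteq\tilde{\setB}$, so $(\tilde{A},\tilde{B})\in\tilde{\setA}\times\tilde{\setB}$. Moreover, since $\tilde{\setA}\subseteq\co(\setA)$ and $\tilde{\setB}\subseteq\co(\setB)$, the inequalities $\rho(\tilde{A}B)\le\rho(\tilde{A}\tilde{B})\le\rho(A\tilde{B})$ continue to hold for all $A\in\tilde{\setA}$ and $B\in\tilde{\setB}$. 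Hence $(\tilde{A},\tilde{B})$ is a saddle point of the function $(A,B)\mapsto\rho(AB)$ on $\tilde{\setA}\times\tilde{\setB}$.

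To conclude, I would apply Lemma~\ref{L:MMcriterium} with $f(A,B)=\rho(AB)$, $X=\tilde{\setA}$, and $Y=\tilde{\setB}$. The sets $\tilde{\setA}$ and $\tilde{\setB}$ are compact by hypothesis, and the map $(A,B)\mapsto\rho(AB)$ is continuous, being the composition of the continuous bilinear multiplication $(A,B)\mapsto AB$ with the continuous spectral radius. The existence of the saddle point $(\tilde{A},\tilde{B})$ then yields $\min_{A\in\tilde{\setA}}\max_{B\in\tilde{\setB}}\rho(AB)=\max_{B\in\tilde{\setB}}\min_{A\in\tilde{\setA}}\rho(AB)=\rho(\tilde{A}\tilde{B})=\rho_{*}$ for every admissible choice of $\tilde{\setA}$ and $\tilde{\setB}$, and $\rho_{*}\ge0$ because $\tilde{A}\tilde{B}$ is a product of non-negative matrices, hence non-negative, so its spectral radius is non-negative. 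I expect no serious obstacle here: the entire difficulty of the minimax equality is absorbed into the construction of the saddle point in Theorem~\ref{T:saddle}, and the only point demanding care is checking that the same pair $(\tilde{A},\tilde{B})$ remains a saddle point for every intermediate pair of sets, which reduces to the two chains of inclusions noted above.
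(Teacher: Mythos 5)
Your proof is correct and follows exactly the paper's own argument: the paper likewise deduces Theorem~\ref{T:minimax} by observing that the saddle point $(\tilde{A},\tilde{B})$ from Theorem~\ref{T:saddle} lies in $\setA\times\setB\subseteq\tilde{\setA}\times\tilde{\setB}$ while inequalities~\eqref{E:saddleAB} hold on the larger sets $\co(\setA)\times\co(\setB)\supseteq\tilde{\setA}\times\tilde{\setB}$, and then applying Lemma~\ref{L:MMcriterium}. You have merely spelled out the inclusion-chasing and the independence of $\rho_{*}$ from the choice of $\tilde{\setA},\tilde{\setB}$, which the paper leaves implicit.
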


To prove this theorem, it suffices to note that, by Theorem~\ref{T:saddle}
inequalities~\eqref{E:saddleAB} will take place for all $A\in\tilde{\setA}$
and $B\in\tilde{\setB}$, and then to apply Lemma~\ref{L:MMcriterium}.

Choosing in Theorem~\ref{T:saddle} different sets $\tilde{\setA}$ and
$\tilde{\setB}$, one may obtain a variety of minimax equalities. For example,
putting a $\tilde{\setA}=\setA$ and $\tilde{\setB}=\setB$, we
get~\eqref{E:minimax}. Putting $\tilde{\setA}=\co(\setA)$ and
$\tilde{\setB}=\co(\setB)$, we get another minimax equality:
\[
\min_{A\in\co(\setA)}\max_{B\in\co(\setB)}\rho(AB)=
\max_{B\in\co(\setB)}\min_{A\in\co(\setA)}\rho(AB).
\]
It is worth noting that the minimax value of the spectral radius $\rho(AB)$
in the last equality, and the value of the corresponding minimax in
equality~\eqref{E:minimax} coincide.

The next example demonstrates that Theorem~\ref{T:minimax} is not valid for
general sets of matrices.

\begin{example}\label{Ex1}
Consider the sets of matrices
\[
\setA=\setB=\left\{%
\begin{pmatrix}
1&0\\0&0
\end{pmatrix},~
\begin{pmatrix}
0&0\\0&1
\end{pmatrix}\right\}.
\]
Then
\[
\min_{A\in\setA}\max_{B\in\setB}\rho(AB)=1,\quad
\max_{B\in\setB}\min_{A\in\setA}\rho(AB)=0,
\]
and the minimax equality~\eqref{E:minimax} does not hold in this case.\qed%
\end{example}

The spectral radius $\rho(AB)$ of the product of (rectangular) matrices $A$
and $B$ is not changed by permutation of these matrices and their
transposition. This implies the following corollary.

\begin{corollary}
Theorems~\ref{T:saddle} and~\ref{T:minimax} hold if to replace the function
$\rho(AB)$ by $\rho(BA)$, as well as by $\rho(A^{\transpose}B^{\transpose})$
or by $\rho(B^{\transpose}A^{\transpose})$.
\end{corollary}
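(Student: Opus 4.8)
The plan is to reduce the Corollary to three classical invariance properties of the spectral radius and then to observe that the three alternative functions coincide identically with $\rho(AB)$ on the relevant domain, so that Theorems~\ref{T:saddle} and~\ref{T:minimax} carry over with no additional hourglass analysis. This is a soft reduction rather than a fresh argument, so I expect no genuine obstacle; the only delicate point is noted at the end.

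First I would record the two elementary facts. For any square matrix $M$ one has $\rho(M)=\rho(M^{\transpose})$, since $M$ and $M^{\transpose}$ have the same characteristic polynomial and hence the same spectrum. For a rectangular pair $A\in\Mset(N,M)$, $B\in\Mset(M,N)$, the products $AB$ and $BA$ share the same nonzero eigenvalues, counted with multiplicity, and differ only in the number of zero eigenvalues (the sizes being $N\times N$ and $M\times M$). Since the spectral radius is governed by the eigenvalue of largest modulus and is unaffected by the presence of extra zero eigenvalues, this gives $\rho(AB)=\rho(BA)$.

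Next I would combine these identities. Using $(AB)^{\transpose}=B^{\transpose}A^{\transpose}$ and $(BA)^{\transpose}=A^{\transpose}B^{\transpose}$ together with the two facts above, for every $A$ and $B$ one obtains
\[
\rho(AB)=\rho(BA)=\rho\bigl((BA)^{\transpose}\bigr)=\rho(A^{\transpose}B^{\transpose}),\qquad
\rho(AB)=\rho\bigl((AB)^{\transpose}\bigr)=\rho(B^{\transpose}A^{\transpose}).
\]
Thus the four functions $\rho(AB)$, $\rho(BA)$, $\rho(A^{\transpose}B^{\transpose})$ and $\rho(B^{\transpose}A^{\transpose})$ coincide pointwise, not merely on $\setA\times\setB$ but on all of $\co(\setA)\times\co(\setB)$.

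Finally, since Theorems~\ref{T:saddle} and~\ref{T:minimax} are assertions about the single function $(A,B)\mapsto\rho(AB)$, and each of the remaining three expressions equals it at every point of $\co(\setA)\times\co(\setB)$, the saddle-point inequalities~\eqref{E:saddleAB} and the minimax equalities transfer verbatim: the very pair $(\tilde{A},\tilde{B})$ furnished by Theorem~\ref{T:saddle} remains a saddle point for each alternative function, and the common value $\rho_{*}$ is unchanged. The only step requiring a word of care is the rectangular identity $\rho(AB)=\rho(BA)$, where one must state explicitly that the differing counts of zero eigenvalues in the $N\times N$ and $M\times M$ products leave the spectral radius untouched.
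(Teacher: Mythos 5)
Your proof is correct and takes essentially the same route as the paper: the paper dispatches this corollary with the single observation that the spectral radius of a product of rectangular matrices is unchanged by permuting the factors and by transposition, which is exactly the pointwise identity $\rho(AB)=\rho(BA)=\rho(A^{\transpose}B^{\transpose})=\rho(B^{\transpose}A^{\transpose})$ that you establish and then apply to transfer the saddle point and the minimax value verbatim. Your write-up simply makes explicit the standard linear-algebra facts behind that one-line remark, so the two arguments coincide.
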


\section{Proof of Theorem~\ref{T:saddle}}\label{S:proof:minmax}

Before proceeding to the proof of Theorem~\ref{T:saddle}, we recall some
definitions and establish auxiliary facts.

The \emph{spectral radius} of an $(N\times N)$-matrix $A$ is defined as the
maximal modulus of its eigenvalues and denoted by $\rho(A)$. The spectral
radius depends continuously on the matrix. If $A>0$ then, by the
Perron-Frobenius theorem~\cite[Theorem~8.2.2]{HJ2:e}, the number $\rho(A)$ is
a simple eigenvalue of the matrix $A$, and all the other eigenvalues of $A$
are strictly less than $\rho(A)$ by modulus. The eigenvector
$v={(v_{1},v_{2},\ldots,v_{N})}^{\transpose}$ corresponding to the eigenvalue
$\rho(A)$ (normalized, for example, by the equation
$v_{1}+v_{2}+\cdots+v_{N}=1$) is uniquely determined and positive.

For ease of reference, we summarize some of the well-known statements of the
theory of non-negative matrices, see, e.g.,~\cite[Lemma 2]{Koz:LAA16}
or~\cite[Lemma 3]{ACDDHK:STACS15} for proofs.

\begin{lemma}\label{L:1}
Let $A$ be a non-negative $(N\times N)$-matrix. Then the following assertions
hold:
\begin{enumerate}[\rm(i)]
\item if $Au\le\rho u$ for some vector $u>0$, then $\rho\ge0$ and
    $\rho(A)\le\rho$;
\item moreover, if in conditions of {\rm(i)}  $A>0$ and $Au\neq\rho u$,
    then $\rho(A)<\rho$;
\item if $Au\ge\rho u$ for some non-zero vector $u\ge0$ and some number
    $\rho\ge0$, then $\rho(A) \ge\rho$;
\item moreover, if in conditions of {\rm(iii)} $A>0$ and $Au\neq\rho u$,
    then $\rho(A)> \rho$.
\end{enumerate}
\end{lemma}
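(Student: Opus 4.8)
The plan is to establish the two non-strict assertions (i) and (iii) first by an elementary iteration argument combined with Gelfand's formula $\rho(A)=\lim_{k\to\infty}\|A^{k}\|^{1/k}$, and then to deduce the strict assertions (ii) and (iv) from them by a one-step bootstrapping that exploits the strict positivity of $A$. The advantage of this route is that it never invokes the full Perron--Frobenius apparatus for (i) and (iii); those two parts hold for every non-negative $A$, and positivity enters only in (ii) and (iv).

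For (i), I would first note that $Au\ge 0$ because $A\ge 0$ and $u>0$, so $\rho u\ge Au\ge 0$ and, since $u>0$, this forces $\rho\ge 0$. Applying the monotone map $A$ repeatedly to $Au\le\rho u$ yields $A^{k}u\le\rho^{k}u$ for all $k$. Writing $\mathbf{1}$ for the all-ones vector and setting $m=\min_{i}u_{i}>0$, $M=\max_{i}u_{i}$, the sandwich $m\mathbf{1}\le u\le M\mathbf{1}$ gives $m\,A^{k}\mathbf{1}\le A^{k}u\le\rho^{k}u\le\rho^{k}M\mathbf{1}$, so that every row sum of $A^{k}$ is at most $(M/m)\rho^{k}$. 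Hence $\|A^{k}\|_{\infty}\le (M/m)\rho^{k}$, and Gelfand's formula gives $\rho(A)\le\rho$. For (iii) the argument is dual: iterating $Au\ge\rho u$ gives $A^{k}u\ge\rho^{k}u\ge 0$, whence in the $\ell^{1}$-norm $\|A^{k}u\|_{1}\ge\rho^{k}\|u\|_{1}$ with $\|u\|_{1}>0$; therefore $\|A^{k}\|_{1}\ge\rho^{k}$ and Gelfand's formula yields $\rho(A)\ge\rho$. (A Collatz--Wielandt characterization of $\rho(A)$ would deliver the same conclusions in one line, but I prefer the self-contained iteration.)

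To pass to the strict versions I would use the elementary fact that if $A>0$ and $v\ge 0$ is non-zero then $Av>0$. For (iv), set $w=Au$; by this fact $w>0$. Since $Au-\rho u\ge 0$ is non-zero, applying $A$ gives $A(Au-\rho u)>0$, which rearranges to $Aw-\rho w>0$ coordinatewise. As $w>0$, the number $\rho'=\min_{i}(Aw)_{i}/w_{i}$ satisfies $\rho'>\rho$, and assertion (iii) applied to the pair $(w,\rho')$ gives $\rho(A)\ge\rho'>\rho$. Assertion (ii) is symmetric: with $w=Au>0$, the vector $\rho u-Au\ge 0$ is non-zero, so $A(\rho u-Au)>0$, i.e.\ $Aw<\rho w$ coordinatewise; then $\rho''=\max_{i}(Aw)_{i}/w_{i}<\rho$, and (i) applied to $(w,\rho'')$ gives $\rho(A)\le\rho''<\rho$.

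The main obstacle, and the only place where care is genuinely needed, is the strict step: one must verify that $Au\neq\rho u$ together with $Au\le\rho u$ (resp.\ $Au\ge\rho u$) produces a non-negative, non-zero difference vector, and that multiplying by the strictly positive matrix $A$ promotes this to a vector that is positive in \emph{every} coordinate. It is precisely here that strict positivity of $A$, rather than mere non-negativity, is indispensable, and the passage from a coordinatewise strict inequality to a uniform bound $\rho'$ or $\rho''$ relies on $w$ having no zero entries. Everything else reduces to monotonicity of $v\mapsto Av$ on the non-negative cone and to the limit in Gelfand's formula, both of which are routine.
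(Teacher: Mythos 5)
Your proof is correct --- every step checks out. Bear in mind, though, that the paper does not actually prove Lemma~\ref{L:1}: it presents it as a summary of well-known facts from the theory of non-negative matrices and defers the proofs to \cite[Lemma 2]{Koz:LAA16} and \cite[Lemma 3]{ACDDHK:STACS15}, which draw on classical Perron--Frobenius-type arguments. Your route is therefore genuinely different: it is self-contained and more elementary. Parts (i) and (iii) come out of nothing more than monotonicity of $x\mapsto Ax$ on the non-negative cone plus Gelfand's formula applied to the induced $\ell^{\infty}$- and $\ell^{1}$-norms (row sums and column sums), which makes it transparent that these two parts hold for every non-negative $A$ with no positivity or irreducibility assumptions whatsoever. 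The strict parts (ii) and (iv) then follow by your bootstrap: the slack vector ($\rho u - Au$ or $Au - \rho u$) is non-negative and non-zero, so multiplying it by the strictly positive $A$ yields strict coordinatewise slack at $w=Au>0$; since $w$ has no zero coordinates, the finitely many ratios $(Aw)_{i}/w_{i}$ give a uniform bound $\rho''<\rho$ (resp.\ $\rho'>\rho$), and feeding $(w,\rho'')$ back into (i) (resp.\ $(w,\rho')$ into (iii)) closes the argument. What the paper's citation buys is brevity; what your proof buys is independence from the Perron--Frobenius theorem itself, which the paper still needs elsewhere (for the positive eigenvector $v$ of $\tilde{A}\tilde{B}$ in the proof of Theorem~\ref{T:saddle}). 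The only degenerate case worth a remark is $\rho=0$ in (ii), where the hypotheses are vacuous because $A>0$ and $u>0$ force $Au>0$; your argument remains formally valid there, so nothing breaks.
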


To analyze the convergence of sequences $\setA_{n}\to\setA_{\infty}$ in the
Hausdorff metric, it is convenient to use the following lemma, see,
e.g.,~\cite[Chapter~E, Propositions~2,~4]{Ok07}.

\begin{lemma}\label{L:Hausconv}
Let $\setA_{n}\in \Kset(N,M)$ for $n=1,2,\dotsc$. Then
$\setA_{n}\to\setA_{\infty}$ in the Hausdorff metric if and only if the
following assertions are valid:
\begin{enumerate}[\rm(i)]
\item for any sequence of indices $n_{1}<n_{2}<\dotsc$, any sequence of
    matrices $A_{n_{i}}\in\setA_{n_{i}}$, $i=1,2,\dotsc$, contains a
    subsequence converging to some element from~$\setA_{\infty}$;

\item for any matrix $A_{\infty}\in\setA_{\infty}$ and any sequence of
    indices $n_{1}<n_{2}<\dotsc$, there exists a sequence of matrices
    $A_{n_{i}}\in\setA_{n_{i}}$, $i=1,2,\dotsc$, converging
    to~$A_{\infty}$.
\end{enumerate}
\end{lemma}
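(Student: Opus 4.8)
The plan is to work directly from the definition of the Hausdorff metric. Writing $H(\setA_{n},\setA_{\infty})=\max\{e_{n},f_{n}\}$ with $e_{n}=\sup_{A\in\setA_{n}}\operatorname{dist}(A,\setA_{\infty})$ and $f_{n}=\sup_{B\in\setA_{\infty}}\operatorname{dist}(B,\setA_{n})$, where $\operatorname{dist}(C,\setR)=\inf_{R\in\setR}\|C-R\|$ denotes the point-to-set distance, Hausdorff convergence $\setA_{n}\to\setA_{\infty}$ is equivalent to $e_{n}\to0$ together with $f_{n}\to0$. Morally, assertion~(i) governs the first excess $e_{n}$ (no point of $\setA_{n}$ strays far from $\setA_{\infty}$), while assertion~(ii) governs the second excess $f_{n}$ (every point of $\setA_{\infty}$ is approximated out of the $\setA_{n}$). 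Two standing facts I would lean on throughout are that the map $C\mapsto\operatorname{dist}(C,\setR)$ is $1$-Lipschitz, and that all the infima above are attained, since each $\setA_{n}$ and $\setA_{\infty}$ is compact.

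For the forward direction, assume $H(\setA_{n},\setA_{\infty})\to0$, so $e_{n}\to0$ and $f_{n}\to0$. To obtain~(i), take any index subsequence $n_{1}<n_{2}<\dotsc$ and matrices $A_{n_{i}}\in\setA_{n_{i}}$; then $\operatorname{dist}(A_{n_{i}},\setA_{\infty})\le e_{n_{i}}\to0$, so choosing a nearest point $B_{i}\in\setA_{\infty}$ gives $\|A_{n_{i}}-B_{i}\|\to0$. Compactness of $\setA_{\infty}$ lets me pass to a subsequence along which $B_{i}\to B_{\infty}\in\setA_{\infty}$, and then $A_{n_{i}}$ converges to the same limit along that subsequence, which is exactly~(i). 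To obtain~(ii), fix $A_{\infty}\in\setA_{\infty}$ and any index subsequence; since $\operatorname{dist}(A_{\infty},\setA_{n_{i}})\le f_{n_{i}}\to0$, picking a nearest point $A_{n_{i}}\in\setA_{n_{i}}$ produces a sequence converging to $A_{\infty}$, which is~(ii).

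For the reverse direction, I would argue by contradiction. Suppose~(i) and~(ii) hold but $H(\setA_{n},\setA_{\infty})\not\to0$; then along some subsequence $H(\setA_{n_{i}},\setA_{\infty})\ge\varepsilon>0$, so for each $i$ at least one of $e_{n_{i}},f_{n_{i}}$ is $\ge\varepsilon$, and after passing to a further subsequence I may assume one alternative holds for all $i$. If $e_{n_{i}}\ge\varepsilon$, compactness yields $A_{n_{i}}\in\setA_{n_{i}}$ with $\operatorname{dist}(A_{n_{i}},\setA_{\infty})\ge\varepsilon$; applying~(i) to this sequence produces a sub-subsequence converging to some $B_{\infty}\in\setA_{\infty}$, whence by the Lipschitz property $\operatorname{dist}(A_{n_{i}},\setA_{\infty})\to\operatorname{dist}(B_{\infty},\setA_{\infty})=0$ along it, a contradiction. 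If instead $f_{n_{i}}\ge\varepsilon$, compactness of $\setA_{\infty}$ yields $B_{i}\in\setA_{\infty}$ with $\operatorname{dist}(B_{i},\setA_{n_{i}})\ge\varepsilon$ and, after a further passage to a subsequence, $B_{i}\to B_{\infty}\in\setA_{\infty}$; applying~(ii) to $B_{\infty}$ along the index subsequence just extracted gives $C_{n_{i}}\in\setA_{n_{i}}$ with $C_{n_{i}}\to B_{\infty}$, so that $\operatorname{dist}(B_{i},\setA_{n_{i}})\le\|B_{i}-C_{n_{i}}\|\to0$, again a contradiction.

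The Lipschitz estimate and the attainment of the infima are routine; the step demanding the most care is the bookkeeping of nested subsequences in the reverse direction, where the ``for every index sequence'' quantifier in~(i) and~(ii) must be invoked with precisely the subsequence produced at the previous stage rather than with the original sequence. Conflating these index sets is the one genuine pitfall, so I would state explicitly which subsequence each application of~(i) and~(ii) refers to.
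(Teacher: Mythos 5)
Your proof is correct, but there is nothing in the paper to compare it against line by line: the paper does not prove Lemma~\ref{L:Hausconv} at all, quoting it as a known fact from~\cite[Chapter~E, Propositions~2,~4]{Ok07}. What you have reconstructed is the standard argument that, for compact sets, Hausdorff convergence coincides with Kuratowski convergence, of which assertions (i) and (ii) are precisely the upper and lower halves. Your decomposition $H(\setA_{n},\setA_{\infty})=\max\{e_{n},f_{n}\}$ correctly pairs $e_{n}$ with (i) and $f_{n}$ with (ii); the forward direction is clean; and in the reverse direction the three delicate points are all handled properly: the pigeonhole passage to a subsequence along which a single excess stays $\ge\varepsilon$, the attainment of the suprema and infima (legitimate, since $\operatorname{dist}(\cdot,\setR)$ is $1$-Lipschitz and the sets $\setA_{n}$ and $\setA_{\infty}$ are compact --- note your proof, like the lemma itself, tacitly assumes $\setA_{\infty}\in\Kset(N,M)$), and, most importantly, the application of (ii) to the fixed limit point $B_{\infty}$ along the already-extracted index subsequence rather than to the moving points $B_{i}$, which is exactly where careless versions of this argument break down and which you flag explicitly. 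Compared with the paper's citation, your route buys self-containedness and makes visible that only compactness is used --- nothing in the argument depends on $\Mset(N,M)$ being finite-dimensional, so the equivalence holds for compact subsets of an arbitrary metric space --- at the cost of a page of subsequence bookkeeping the paper chose to outsource.
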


At last, we will need the following simplified version of Berge's Maximum
Theorem, see~\cite[Ch.~6, \S~3, Theorems 1, 2]{Berge97} and also~\cite[Ch.~E,
Sect.~3]{Ok07}.
\begin {lemma}\label{L:Berge}
If $\varphi$ is a continuous numerical function in the product of topological
spaces $X\times Y$, where $Y$ is compact, then the functions $M(x)=\max_{y\in
Y}\varphi(x,y)$ and $m(x)=\min_{y\in Y}\varphi(x,y)$ are continuous.
\end{lemma}
Note that in the full version of the Maximum Theorem the set over which the
maximum is taken in the definitions of $M(x)$ and $m(x)$ is allowed to vary
with $x$.

We are now ready to prove Theorem~\ref{T:saddle}.

\begin{proof}[Proof of Theorem~\ref{T:saddle}]
First, let $\setA\in\Hset(N,M)$ and $\setB\in\Hset(M,N)$. To construct the
matrices $\tilde{A}\in\setA$ and $\tilde{B}\in\setB$ satisfying
\eqref{E:saddleAB} we proceed as follows. Let us note that for each
$B\in\setB$ there exists a matrix $A_{B}\in\setA$ which minimizes (in
$A\in\setA$) the quantity $\rho(AB)$. Such a matrix $A_{B}$ exists by virtue
of compactness of the set $\setA$ and continuity  of the function $\rho(AB)$
in $A$ and $B$. Then, for each matrix $B\in\setB$, the relations
\[
\rho(A_{B}B)=\min_{A\in\setA}\rho(AB)\le\rho(AB)
\]
will be valid for all $A\in\setA$. Here, by Lemma~\ref{L:Berge} the function
$\min_{A\in\setA}\rho(AB)$ is continuous in $B$, and therefore there exists a
matrix $\tilde{B}\in\setB$ that maximizes its value on the set $\setB$.

Set $\tilde{A}=A_{\tilde{B}}$. In this case
\begin{equation}\label{E:tildeAB}
\max_{B\in\setB}\rho(A_{B}B)=\max_{B\in\setB}\min_{A\in\setA}\rho(AB)=
\min_{A\in\setA}\rho(A\tilde{B})=\rho(A_{\tilde{B}}\tilde{B})=
\rho(\tilde{A}\tilde{B}),
\end{equation}
where the first equality follows from the definition of the matrix $A_{B}$,
the second follows from the definition of $\tilde{B}$, the third follows from
the definition of $A_{\tilde{B}}$, and the fourth follows from the definition
of $\tilde{A}$.

Let $v={(v_{1}, v_{2}, \ldots, v_{N})}^\transpose$ be the positive
eigenvector of the $(N\times N)$-matrix $\tilde{A} \tilde{B}$ corresponding
to the eigenvalue $\rho(\tilde{A}\tilde{B})$ which is uniquely defined up to
a positive factor. By denoting $w = \tilde{B}v\in\bbR^{M}$ we obtain
$\rho(\tilde{A}\tilde{B})v=\tilde{A}w$. Let us show that in this case
\begin{equation}\label{E:rvaaa}
\rho(\tilde{A}\tilde{B})v\le Aw
\end{equation}
for all $A\in\setA$. Indeed, otherwise by assertion H1 of the hourglass
alternative there exists a matrix $\bar{A}\in\setA $ such that
$\rho(\tilde{A}\tilde{B})v\ge\bar{A}w$ and
$\rho(\tilde{A}\tilde{B})v\neq\bar{A}w$ which implies, by definition of the
vector $w$, that $\rho(\tilde{A}\tilde{B})v\ge \bar{A}\tilde{B}v$ and
$\rho(\tilde{A}\tilde{B})v\neq\bar{A}\tilde{B}v$. Then by Lemma~\ref{L:1}
$\rho(\bar{A}\tilde{B})<\rho(\tilde{A}\tilde{B})$, and therefore
$\min_{A\in\setA}\rho(A\tilde{B})< \rho(\tilde{A}\tilde{B})$, which
contradicts to~\eqref{E:tildeAB}. This contradiction completes the proof of
inequality~\eqref{E:rvaaa}.

Similarly, now we show that
\begin{equation}\label{E:wbbb}
w\ge Bv
\end{equation}
for all $B\in\setB$. Again, assuming the contrary by assertion H2 of the
hourglass alternative there exists a matrix $\bar{B}\in\setB$ such that $w\le
\bar{B}v$ and $w\neq \bar{B}v$. This last inequality, together
with~\eqref{E:rvaaa} applied to the matrix $A_{\bar{B}}$, yields
$\rho(\tilde{A}\tilde{B})v\le A_{\bar{B}}\bar{B}v$ and
$\rho(\tilde{A}\tilde{B})v\neq A_{\bar{B}}\bar{B}v$. Then by Lemma~\ref{L:1}
$\rho(\tilde{A}\tilde{B})<\rho(A_{\bar{B}}\bar{B})$, and therefore
$\max_{B\in\setB}\rho(A_{B}B)> \rho(\tilde{A}\tilde{B})$, which again
contradicts to~\eqref{E:tildeAB}. This contradiction completes the proof of
inequality~\eqref{E:wbbb}.

Inequality~\eqref{E:rvaaa} implies, by definition of the vector $w$, that
\[
\rho(\tilde{A}\tilde{B})v\le A\tilde{B}v
\]
for all $A\in\setA$. Then this inequality holds also for all
$A\in\co(\setA)$, which by Lemma~\ref{L:1} yields
\begin{equation}\label{E:propAB2}
\rho(\tilde{A}\tilde{B})\le \rho(A\tilde{B}),\qquad A\in\co(\setA).
\end{equation}

Similarly, left-multiplying the inequality~\eqref{E:wbbb} to the positive
matrix $\tilde{A}$, and taking into account the equality
$\rho(\tilde{A}\tilde{B})v=\tilde{A}w$, we see that
\[
\rho(\tilde{A}\tilde{B})v\ge \tilde{A}Bv
\]
for all $B\in\setB$. Then this inequality holds also for all
$B\in\co(\setB)$, which by Lemma~\ref{L:1} yields
\begin{equation}\label{E:propAB1}
\rho(\tilde{A}B)\le \rho(\tilde{A}\tilde{B}),\qquad B\in\co(\setB).
\end{equation}

Inequalities~\eqref{E:propAB2} and~\eqref{E:propAB1} complete the proof of
the theorem in the case when $\setA\in\Hset(N,M)$ and $\setB\in\Hset(M,N)$.

We proceed to the final stage of the proof. Let now
$\setA\in\overline{\Hset}(N,M)$ and $\setB\in\overline{\Hset}(M,N)$. Then,
for $n=1,2,\dotsc$, there exist sets of matrices $\setA_{n}\in\Hset(N,M)$ and
$\setB_{n}\in\Hset(M,N)$ such that
\begin{equation}\label{E:AnABnB}
\setA_{n}\to\setA,\quad \setB_{n}\to\setB.
\end{equation}
Therefore, as already shown, in virtue of~\eqref{E:propAB2}
and~\eqref{E:propAB1}, for each $n$, there exist matrices
$\tilde{A}_{n}\in\setA_{n}$ and $\tilde{B}_{n}\in\setB_{n}$ such that
\begin{alignat}{2}\label{E:propAB2eps}
\rho(\tilde{A}_{n}\tilde{B}_{n})&\le
\rho(A\tilde{B}_{n}),\qquad
&A\in\co(\setA_{n}),\\
\label{E:propAB1eps}
\rho(\tilde{A}_{n}\tilde{B}_{n})&\ge\rho(\tilde{A}_{n}B),\qquad
&B\in\co(\setB_{n}).
\end{alignat}

By~\eqref{E:AnABnB}, in view of the compactness of the sets $\setA$, $\setB$,
$\setA_{n}$ and $\setB_{n}$, each of the sequences of matrices $\{A_{n}\}$
and $\{B_{n}\}$ without loss of generality may be treated convergent:
$\tilde{A}_{n}\to\tilde{A}$ and $\tilde{B}_{n}\to\tilde{B}$, where due to
assertion (i) of Lemma~\ref{L:Hausconv} $\tilde{A}\in\setA$ and
$\tilde{B}\in\setB$, i.e.
\begin{equation}\label{E:ABtildelim}
\tilde{A}_{n}\to\tilde{A}\in\setA,\quad
\tilde{B}_{n}\to\tilde{B}\in\setB.
\end{equation}

Finally, let us take an arbitrary matrix $A\in\co(\setA)$. Then, by
definition of the convex hull of a set, the matrix $A$ is a finite convex
combination of matrices from $\setA$, i.e.
\[
A=\sum_{i=1}^{r}\lambda_{i}A^{(i)},
\]
where $r$ is some integer, $\lambda_{i}$ are non-negative numbers whose sum
is $1$, and $A^{(i)}\in\setA$ for $i=1,2,\ldots,r$. Then by assertion (ii) of
Lemma~\ref{L:Hausconv} for each $i=1,2,\ldots,r$ there exist sequences of
matrices $\{A^{(i)}_{n}\}$ such that $A^{(i)}_{n}\in\setA_{n}$ for all
$n=1,2,\dotsc$, and $A^{(i)}_{n}\to\setA^{(i)}$. Therefore the matrices
\[
A_{n}=\sum_{i=1}^{r}\lambda_{i}A^{(i)}_{n}\in\co(\setA_{n})
\]
satisfy the limit relation
\begin{equation}\label{E:Aepslim}
A_{n}\to A.
\end{equation}

Now, substituting the matrices $\tilde{A}_{n}$, $\tilde{B}_{n}$ and $A_{n}$
in~\eqref{E:propAB2eps} we obtain
\begin{equation}\label{E:lastineq}
\rho(\tilde{A}_{n}\tilde{B}_{n})\le
\rho(A_{n}\tilde{B}_{n}).
\end{equation}

Taking the limit in~\eqref{E:lastineq}, due to~\eqref{E:ABtildelim}
and~\eqref{E:Aepslim} we obtain the inequality~\eqref{E:propAB2} valid, this
time, in the case when $\setA\in\overline{\Hset}(N,M)$ and
$\setB\in\overline{\Hset}(M,N)$. Similarly we can prove
inequality~\eqref{E:propAB1} in the case when $\setA\in\overline{\Hset}(N,M)$
and $\setB\in\overline{\Hset}(M,N)$.

The proof of Theorem~\ref{T:saddle} is completed.
\end{proof}

\section*{Acknowledgments}
The author is genuinely grateful to Eugene Asarin for numerous inspiring
discussions and constructive criticism.

\section*{Funding}
The work was carried out at the Institute of Radio-engineering and
Electronics, Russian Academy of Sciences, and was supported by the Russian
Science Foundation, Project no.~\mbox{16--11--00063}.


\end{document}